\newtheorem{lemma}{Lemma}
\newcommand{\en}{\mathbb{N}}
\newcommand{\qu}{\mathbb{Q}}
\newcommand{\zet}{\mathbb{Z}}
\newcommand{\ce}{\mathbb{C}}
\newtheorem{theorem}{Theorem}
\newtheorem{corollary}{Corollary}
\begin{document}
\title{On Hilbert's irreducibility theorem}
\subjclass[2000]{11C08, 11G35, 11R32, 11R45}
\author{Abel Castillo}
\address{Department of Mathematics, Statistics, and Computer Science, University of Illinois at Chicago\\
851 S Morgan St, Chicago, IL 60607}
\email{acasti8@uic.edu}
\author{Rainer Dietmann}
\address{Department of Mathematics, Royal Holloway, University of London\\
TW20 0EX Egham, United Kingdom}
\email{Rainer.Dietmann@rhul.ac.uk}
\begin{abstract}
In this paper we obtain new quantitative forms of Hilbert's Irreducibility
Theorem. In particular, we show that if $f(X, T_1, \ldots, T_s)$ is
an irreducible polynomial with integer coefficients,
having Galois group $G$ over the function field
$\qu(T_1, \ldots, T_s)$, and $K$ is any subgroup of $G$, then there
are at most
$O_{f, \varepsilon}(H^{s-1+|G/K|^{-1}+\varepsilon})$
specialisations $\mathbf{t} \in \zet^s$
with $|\mathbf{t}| \le H$ such that the resulting polynomial
$f(X)$ has Galois group $K$ over the rationals.
\end{abstract}
\maketitle
\section{Introduction}
One of the fundamental results in Diophantine geometry
is Hilbert's Irreducibility Theorem \cite{Hilbert},
stating that
if $f(X_1, \ldots, X_r, T_1, \ldots, T_s) \in \qu[X_1, \ldots, X_r,
T_1, \ldots, T_s]$ is irreducible, then there exists a specialisation
$\mathbf{t} \in \qu^s$ such that $f(X_1, \ldots, X_r)
=f(X_1, \ldots, X_r, t_1, \ldots, t_s)$ as a rational polynomial in
$X_1, \ldots, X_r$ still is irreducible over
$\qu[X_1, \ldots, X_r]$. In fact, if $r=1$ then more
is true: Suppose that $f(X, T_1, \ldots, T_s) \in \qu[X, T_1, \ldots, T_s]$
is irreducible and of degree $n$ in $X$. Consider $f$ as a polynomial in
$X$ over the rational function field $L=\qu(T_1, \ldots, T_s)$,
having roots $\alpha_1, \ldots, \alpha_n$ in the algebraic closure
$\overline{L}$. As $f$ is irreducible, these
roots are distinct, and we can consider the Galois group $G$ of $f$
over $L$ as a subgroup of the symmetric group $S_n$. Then there
exists a specialisation $\mathbf{t} \in \qu^s$ such that
the resulting rational polynomial in $X$ still is irreducible and
has Galois group $G$ over $\qu$. In fact, if $\mathbf{t}$
is chosen in such a way that the specialised polynomial in $X$
still is of degree $n$, and separable, then its Galois group $G_{\mathbf t}$
over $\qu$ is a subgroup of $G$ (well-defined up to conjugation,
see Lemma \ref{lem:UnramifiedImpliesEmbedding}
for the construction of an embedding of
$G_{\mathbf t}$ into $G$) and it turns out that
`almost all' specialisations for $\mathbf{t}$ preserve the
Galois group, i.e. $G_{\mathbf t}=G$. In this paper we are interested in getting
precise quantitative forms of these statements,
so in the setting for $r=1$ from above, for fixed $f$ and any
subgroup $K$ of $G$ let
\begin{align*}
  N_f(H; K) = \# \{\mathbf{t} \in \zet^s:
  |\mathbf{t}| \le H & \mbox{ and the splitting field
  of $f(X, \mathbf{t})$ over $\qu$} \\
  & \mbox{has Galois group $K$}\},
\end{align*}
where we use $|\cdot|$ to denote the maximum norm of a vector. Note that
without loss of generality we can assume $f$ to have integer coefficients,
and in this arithmetic setting we are counting integer specialisations
$\mathbf{t}$ of bounded height $H$. Our first result is the following.
\begin{theorem}
\label{miami}
Let $\varepsilon>0$.
Suppose that $f(X, \mathbf{T}) \in
\zet[X, T_1, \ldots, T_s]$ is irreducible.
Let $G$ be the Galois group of
$f(X)$ over $\qu(T_1, \ldots, T_s)$, and let $K$ be a subgroup of $G$. Then
\begin{equation}
\label{herbst}
  N_f(H; K) \ll_{f, \varepsilon} H^{s-1+|G/K|^{-1}+\varepsilon},
\end{equation}
where $|G/K|$ denotes the index of $K$ in $G$.
\end{theorem}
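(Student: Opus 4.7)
The plan is to convert the Galois-specialisation count into an integer point count on the resolvent hypersurface attached to $K$, and then appeal to a dimension-growth bound for hypersurfaces of degree $d = |G/K|$. Concretely, set $L = \qu(T_1, \ldots, T_s)$ and let $N/L$ be the splitting field of $f$, so that by the Galois correspondence the fixed field $M = N^K$ has degree $d$ over $L$. Choose a primitive element $\theta$ for $M/L$, and let $g(Y, \mathbf{T}) \in \zet[Y, \mathbf{T}]$ be its minimal polynomial over $L$ after clearing denominators; this has degree $d$ in $Y$ and is irreducible over $L$. The key Galois-theoretic input, based on the embedding supplied by Lemma~\ref{lem:UnramifiedImpliesEmbedding}, is that for any $\mathbf{t} \in \zet^s$ lying off the discriminant locus and satisfying $G_{\mathbf{t}} \subseteq K$ up to $G$-conjugacy, the specialised polynomial $g(Y, \mathbf{t})$ acquires a rational root.

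I would then reduce to counting integer points on the affine hypersurface $V : g(Y, \mathbf{T}) = 0$ in $\mathbb{A}^{s+1}$. A standard bound on roots of integer polynomials gives $|y| \ll_f H^D$ for a constant $D$ depending only on $f$ whenever $g(y, \mathbf{t}) = 0$ with $|\mathbf{t}| \leq H$ (clearing at worst a bounded denominator in $y$). The discriminant locus is itself a hypersurface in $\mathbb{A}^s$ and therefore contains at most $O_f(H^{s-1})$ integer points of height $\leq H$, which is absorbed by the target bound. Consequently
\[
  N_f(H; K) \ll_f H^{s-1} + \#\{(\mathbf{t}, y) \in V(\zet) : |\mathbf{t}| \leq H,\ |y| \ll_f H^D\}.
\]

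The decisive step is then to bound the above cardinality by $O_{f, \varepsilon}(H^{s - 1 + 1/d + \varepsilon})$. For this one needs the geometric irreducibility of $g$, which follows from $\qu$ being algebraically closed in $M$, a consequence of $N/\qu(\mathbf{T})$ being a regular extension (built into the hypothesis that $G$ is the full geometric Galois group, or else arranged after a harmless reduction). Given this, I would invoke an affine, unequal-box dimension-growth estimate, in the spirit of the theorems of Heath-Brown, Salberger, Browning, and Walsh on integer points on hypersurfaces, tailored so that the gain $H^{1/d}$ appears in the tightly constrained $\mathbf{t}$-directions.

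The main obstacle lies precisely there. A direct application of projective dimension growth to the projective closure of $V$, with projective height $\max(|\mathbf{t}|, |y|) \ll H^D$, would only give a bound of shape $O(H^{D(s - 1 + 1/d) + \varepsilon})$, which is far too weak. What is needed is a determinant-method argument sensitive to the unequal box, so that the exponent saving $-1 + 1/d$ is realised in the $\mathbf{t}$-coordinates alone, and uniform enough in $g$ that all dependence is hidden in the implicit constant $O_{f,\varepsilon}$. This adaptation of the determinant method to affine boxes with highly disparate side lengths is where the technical heart of the argument will sit.
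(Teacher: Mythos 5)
Your overall architecture is sound, and your resolvent is in fact a genuinely different (and cleaner) one than the paper's: you take the minimal polynomial $g(Y,\mathbf{T})$ of a primitive element of the fixed field $N^{K}$, which has degree exactly $d=|G/K|$ in $Y$, whereas the paper builds a coset resolvent $\Phi_{f,K}$ of degree $|S_n/K|$ in $Z$ from $S_n/K$-invariants of the roots (Lemmas \ref{resolvent} and \ref{stress}) and then shows every $\qu$-irreducible factor has $Z$-degree at least $|G/K|$. Your Galois-theoretic reduction (specialisations with $G_{\mathbf t}$ conjugate into $K$ force a rational, hence after clearing denominators integral, root of $g(Y,\mathbf t)$) is correct modulo the usual codimension-one exclusions, which are absorbed by $O_f(H^{s-1})$. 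One inaccuracy: the theorem does \emph{not} assume $G$ is the geometric Galois group, so $N/\qu(\mathbf T)$ need not be regular and $\qu$ need not be algebraically closed in $N^K$; fortunately geometric irreducibility is not actually required, since a $\qu$-irreducible but geometrically reducible plane curve carries only $O_d(1)$ rational points.

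The genuine gap is the step you yourself flag as the ``technical heart'': the bound
$\#\{(\mathbf t,y): g(y,\mathbf t)=0,\ |\mathbf t|\le H,\ |y|\ll H^{D}\}\ll_{f,\varepsilon}H^{s-1+1/d+\varepsilon}$
is asserted but not proved, and without it there is no theorem. Moreover, you frame the missing ingredient as a new hypersurface-level determinant-method estimate for lopsided affine boxes, which overshoots what is needed. The paper's Lemma \ref{baer_maus} obtains exactly this bound by \emph{fibering down to plane curves}: for $s=1$ it applies the Browning--Heath-Brown theorem (Lemma \ref{athen}), whose exponent $\log P_1\log P_2/\log T$ is precisely the lopsided-box mechanism you want --- the monomial $Y^{d}$ (here monicity in $Y$ matters) forces $T\ge H^{\alpha d}$ with $P_1=H^{\alpha}$, $P_2=H$, yielding $H^{1/d+\varepsilon}$. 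For $s>1$ one freezes $t_2,\dots,t_s$ and applies the curve bound in the $(y,t_1)$-plane, but this requires controlling the fibres where $g(Y,T_1,t_2,\dots,t_s)$ becomes reducible: those with all factors of $Y$-degree $\ge 2$ are handled by the classical bound \eqref{vancouver} on reducible specialisations times $H^{1/2+\varepsilon}$, while those splitting off a linear factor in $Y$ need a separate induction on $s$ via a Hilbert-irreducibility specialisation of $t_1$. None of this bookkeeping is present in your sketch, and the linear-factor case in particular is not something a generic ``dimension growth'' citation will dispose of. To complete your argument you should either prove the analogue of Lemma \ref{baer_maus} for your $g$ (the fibration-plus-Browning--Heath-Brown route works verbatim once $g$ is arranged to be monic in $Y$ with integer coefficients) or cite a result of exactly that strength.
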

In particular, this shows that almost all specialisations for
$\mathbf{t}$ preserve the Galois group $G$ of $f$, and specialisations
leading to small subgroups of $G$ are rare.
Our result is not the first of its kind;
Cohen (see Theorem 2.1 in \cite{C}), using the large sieve,
obtained a bound in a more general number field setting, but with exponent
$s-1/2$ for $G \ne K$ instead of $s-1+|G/K|^{-1}$.
The first two bounds in the literature that are sensitive to the size of $K$
are apparently due to the second author \cite{D},
who in the special case of the polynomial
\[
  X^n + T_1 X^{n-1} + \ldots + T_n
\]
already obtained \eqref{herbst} (see \cite{R} for very recent
improvements in this special case when in addition $n \ge 12$),
and due to Zywina \cite{Z}.
Zywina, like Cohen, works over general number fields rather than the
rational numbers, but uses the larger sieve instead of the large
sieve and obtains the same bound \eqref{herbst} for the number of
all specialisations leading to a polynomial having Galois group
contained in $K$, where $K$ is allowed to be any subset of $G$
stable under conjugation, for example a normal subgroup.
Our work makes use of recent advances on bounding the number of
points on curves instead of sieve methods, generalising the
approach from \cite{D};
note that a somewhat similar line of attack was also used in a few
previous papers (see \cite{SZ}, \cite{W}, \cite{DW})
discussing the related problem of bounding the
smallest admissible specialisation in Hilbert's irreducibility theorem.
We restrict our attention to the field
of rational numbers; the method should generalise to number fields,
provided a suitable analogue of \cite{BH} holds.
It gives sharper bounds than Cohen's and Zywina's results in most cases,
namely as soon as $K$ is any non-normal subgroup of $G$
with index exceeding 2.\\
To summarise our main result on Hilbert's
irreducibility theorem, let us keep the notation from above and
introduce the quantity
\begin{align*}
  E_f(H) = \# \{\mathbf{t} \in \zet^s: |\mathbf{t}| \le H &
  \mbox{ and the splitting field
  of $f(X, \mathbf{t})$ over $\qu$} \\
  & \mbox{has Galois group different from $G$}\}.
\end{align*}
\begin{corollary}
\label{mittwoch}
Keeping the notation and the assumptions from Theorem \ref{miami}, we have
\[
  E_f(H) \ll_{f, \varepsilon} H^{s-1+\delta_G+\varepsilon},
\]
where
\[
  \delta_G =\max\{|G/K|^{-1} : \mbox{$K$ is a proper subgroup of $G$}\}.
\]
\end{corollary}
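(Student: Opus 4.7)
The plan is to derive the corollary from Theorem \ref{miami} by a union bound over the (finitely many) proper subgroups of $G$. Suppose $\mathbf{t} \in \zet^s$ with $|\mathbf{t}| \le H$ is counted by $E_f(H)$. Let $A(\mathbf{T}) \in \zet[T_1, \ldots, T_s]$ denote the leading coefficient of $f$ in $X$, and $D(\mathbf{T}) \in \zet[T_1, \ldots, T_s]$ the discriminant of $f$ with respect to $X$; both are non-zero polynomials, the latter because irreducibility of $f$ over $L = \qu(T_1, \ldots, T_s)$ forces separability of $f$ in $X$. By the standard Schwartz--Zippel bound, the set of $\mathbf{t}$ with $A(\mathbf{t})D(\mathbf{t}) = 0$ contributes only $O_f(H^{s-1})$ to $E_f(H)$, which is absorbed into the target bound.

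For any other $\mathbf{t}$ counted by $E_f(H)$, the polynomial $f(X, \mathbf{t})$ has full degree $n$ and is separable, so Lemma \ref{lem:UnramifiedImpliesEmbedding} realises its Galois group $G_{\mathbf t}$ as a subgroup of $G$, well-defined up to conjugation in $G$. Since $G_{\mathbf t} \ne G$, the image is a proper subgroup. Let $\mathcal{K}$ be a set of representatives for the conjugacy classes of proper subgroups of $G$; this is a finite set depending only on $f$ (via $G$). Then
\[
  E_f(H) \ll_f H^{s-1} + \sum_{K \in \mathcal{K}} \, \sum_{K' \text{ conjugate to } K} N_f(H; K').
\]
Conjugate subgroups share the same index, $|G/K'| = |G/K|$, and the number of $G$-conjugates of a fixed $K$ is at most $|G|$, so Theorem \ref{miami} gives
\[
  E_f(H) \ll_{f, \varepsilon} H^{s-1} + \sum_{K \in \mathcal{K}} H^{s-1+|G/K|^{-1}+\varepsilon} \ll_{f, \varepsilon} H^{s-1+\delta_G + \varepsilon},
\]
since each summand is bounded by $H^{s-1+\delta_G + \varepsilon}$ by definition of $\delta_G$, while the initial $H^{s-1}$ is absorbed whenever $G$ admits a proper subgroup (the case of trivial $G$ being vacuous, as then $E_f(H) = 0$).

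No serious obstacle arises: the argument is a finite union bound combined with Schwartz--Zippel for the exceptional specialisations. The only minor point requiring attention is the book-keeping needed to translate the condition ``Galois group different from $G$'' into an honest sum over specific proper subgroups $K' \le G$, which is precisely what the embedding supplied by Lemma \ref{lem:UnramifiedImpliesEmbedding} affords.
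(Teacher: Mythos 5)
Your proof is correct and follows essentially the same route as the paper: discard the $O_f(H^{s-1})$ specialisations where the leading coefficient or the discriminant vanishes, use Lemma \ref{lem:UnramifiedImpliesEmbedding} to realise the specialised Galois group as a proper subgroup of $G$, and conclude by a finite union bound over proper subgroups via Theorem \ref{miami}. The extra book-keeping with conjugacy classes is harmless and matches the paper's implicit sum over all proper subgroups.
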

The quantity $\delta_G$ in Corollary \ref{mittwoch} for many 
groups can be as large as $\frac{1}{2}$, for example for $G=S_n$,
but for many interesting groups it can also be pretty small:
For example, if $G=A_n$ and $n \ge 5$, then $\delta_G = \frac{1}{n}$
(see \cite{DM}, Theorem 5.2A).
Coming back to the original question of
irreducibility, still assuming $r=1$, let
\[
  R_f(H) = \# \{\mathbf{t} \in \zet^s: |\mathbf{t}| \le H \mbox{ and 
  $f(X, \mathbf{t})$ becomes reducible in $\qu[X]$}\}.
\]
\begin{corollary}
\label{regen}
Keeping the notation and assumptions from Theorem \ref{miami}, we have
\begin{equation}
\label{panik}
  R_f(H) \ll_{f, \varepsilon} H^{s-1+\gamma_G+\varepsilon},
\end{equation}
where
\begin{equation}
\label{eisbaer}
  \gamma_G = \max\{|G/K|^{-1} : \mbox{$K$ is an intransitive subgroup
  of $G$}\}.
\end{equation}
\end{corollary}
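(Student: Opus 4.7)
The plan is to deduce Corollary \ref{regen} directly from Theorem \ref{miami} by translating the reducibility of $f(X, \mathbf{t})$ over $\qu$ into a group-theoretic condition on the specialised Galois group $G_{\mathbf{t}}$.

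Since $f(X, \mathbf{T})$ is irreducible of degree $n$ in $X$ over $\qu(T_1, \ldots, T_s)$, its Galois group $G$ is a transitive subgroup of $S_n$ acting on the roots. For a specialisation $\mathbf{t} \in \zet^s$ such that $f(X, \mathbf{t})$ retains degree $n$ and is separable, the embedding $G_{\mathbf{t}} \hookrightarrow G$ from Lemma \ref{lem:UnramifiedImpliesEmbedding} respects the identification of roots, so $f(X, \mathbf{t})$ factors nontrivially in $\qu[X]$ if and only if the image of $G_{\mathbf{t}}$ acts intransitively on the set of roots, i.e.\ is an intransitive subgroup of $G \le S_n$. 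The set of $\mathbf{t}$ violating the degree or separability condition lies in a proper Zariski closed subvariety of $\mathbb{A}^s$ (cut out by the vanishing of the leading coefficient of $f$ in $X$, or by the discriminant of $f$ in $X$, both nonzero as polynomials in $\mathbf{T}$ by irreducibility of $f$), and a standard lattice point count shows it contributes only $O(H^{s-1})$ integer points with $|\mathbf{t}| \le H$.

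The remaining reducible specialisations can therefore be partitioned by the conjugacy class of $G_{\mathbf{t}}$, which must be an intransitive subgroup $K$ of $G$; here one uses that every subgroup of an intransitive permutation group is itself intransitive, so one need not worry about reducible specialisations whose Galois group is properly contained in some intransitive $K$. Writing
\[
  R_f(H) \le \sum_{\substack{K \le G \\ K \text{ intransitive}}} N_f(H; K) + O(H^{s-1})
\]
and applying Theorem \ref{miami} to each of the finitely many such $K$ gives
\[
  N_f(H; K) \ll_{f, \varepsilon} H^{s-1+|G/K|^{-1}+\varepsilon} \le H^{s-1+\gamma_G+\varepsilon}
\]
by the definition of $\gamma_G$ in \eqref{eisbaer}. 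Summing the boundedly many terms yields \eqref{panik}.

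Essentially no new obstacle arises beyond Theorem \ref{miami}; the proof is one of bookkeeping. The one point requiring some care is verifying that the embedding $G_{\mathbf{t}} \hookrightarrow G$ supplied by Lemma \ref{lem:UnramifiedImpliesEmbedding} is compatible with the permutation action on roots, so that reducibility corresponds exactly to intransitivity rather than merely to the existence of some invariant polynomial relation among the roots. This compatibility is, however, built into the standard construction of the embedding via decomposition groups at unramified primes, and will be available from the setup already used to prove Theorem \ref{miami}.
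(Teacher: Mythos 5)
Your proposal is correct and follows essentially the same route as the paper: discard the $O(H^{s-1})$ specialisations where the degree drops or the discriminant vanishes, observe via Lemma \ref{lem:UnramifiedImpliesEmbedding} that the remaining reducible specialisations have Galois group equal to an intransitive subgroup $K$ of $G$, and sum the bound of Theorem \ref{miami} over the finitely many such $K$. The only difference is cosmetic: you spell out the reducibility--intransitivity equivalence (and add a harmless, unneeded remark about subgroups of intransitive groups), whereas the paper leaves this implicit.
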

Of course always $\gamma_G \le \delta_G$, but often $\gamma_G$ is much
smaller than $\delta_G$. As an example, consider $f(X,t)=X^5-t$.
The Galois group of $f(X)=f(X, t)$ over $\qu(t)$ is the dihedral
group $D_{10}$. Clearly $\delta_G=\frac{1}{2}$, but the only proper
subgroups of $D_{10}$ have $2$ or $5$ elements. Those of order $5$
are cyclic of order $5$ and therefore transitive, whereas those of
order $2$ fix one element and thus are intransitive. Consequently,
$\gamma_G=\frac{1}{5}$.
In this example, $f(X,t)$ becomes reducible as soon as $t$ is a
fifth power, so the bound \eqref{panik} actually turns out to be
sharp here.\\
Let us also
remark that in this special case $s=1$ sometimes
more can be done, see for example the papers \cite{F} and \cite{PM}.\\
Finally, let us reconsider Hilbert's Irreducibility Theorem in its
general form applying to polynomials $F(X_1, \ldots, X_r, T_1, \ldots,
T_s)$ in $r$ variables $X_1, \ldots, X_r$. This case can be reduced
to the special case $r=1$ by \textit{Kronecker's specialisation},
see for example Chapter 9, \S3 in \cite{Lang}, or the proof of
Theorem 2.5 in \cite{C}, where it has been shown that
if $f(X_1, \ldots, X_r, T_1, \ldots, T_s) \in \zet[X_1, \ldots, X_r,
T_1, \ldots, T_s]$ is irreducible over $\qu$, then for
\begin{align*}
  J_f(H) = \# & \{\mathbf{t} \in \zet^s: |\mathbf{t}| \le H \mbox{ and}\\
  & f(X_1, \ldots, X_r, \mathbf{t}) \mbox{ becomes reducible
  in $\qu[X_1, \ldots, X_r]$}\}
\end{align*}
one has the upper bound
\begin{equation}
\label{vancouver}
  J_f(H) \ll_{f} H^{s-1/2} \log H.
\end{equation}
In fact, as in Corollary \ref{regen}, the exponent $s-1/2$ in
general is sharp as can be seen for example by considering
the polynomial
\[
  f(X_1, \ldots, X_r, T_1, \ldots, T_s) =
  (X_1+\ldots+X_r)^2-(T_1+\ldots+T_s).
\]
Like in Corollary \ref{regen}, however, in special cases one can
do better.
\begin{theorem}
\label{bodensee}
Let $f(X_1, \ldots, X_r, T_1, \ldots, T_s) \in
\zet[X_1, \ldots, X_r, T_1, \ldots, T_s]$ be irreducible, and
suppose that for some $i \in \{1, \ldots, r\}$ the monomial of
highest degree in $X_i$ is of the form $X_i^n h$, where $n \ge 1$ and $h$
depends at most on $T_1, \ldots, T_s$, but not on $X_j$ for
$j \ne i$. Moreover, let $G$ be the Galois group of the splitting
field of $f(X_i)$, considered as a polynomial over the
function field
$\qu(X_1, \ldots, X_{i-1}, X_{i+1}, T_1, \ldots, T_s)$. Then
\[
  J_f(H) \ll_{f, \varepsilon} H^{s-\gamma_G+\varepsilon},
\]
where $\gamma_G$ has been defined in \eqref{eisbaer}.
\end{theorem}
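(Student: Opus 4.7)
The plan is to reduce Theorem \ref{bodensee} to the one-variable reducibility bound of Corollary \ref{regen} by specialising the variables $X_j$ with $j\ne i$ at suitable integer values. Without loss of generality take $i=1$ and write $\mathbf{X}'=(X_2,\ldots,X_r)$, so that $f\in\zet[X_1,\mathbf{X}',\mathbf{T}]$ has leading $X_1$-coefficient $h(\mathbf{T})\in\zet[\mathbf{T}]$, independent of $\mathbf{X}'$, and $G$ is the Galois group of $f$ as a polynomial in $X_1$ over $\qu(\mathbf{X}',\mathbf{T})$. I would invoke the classical qualitative Hilbert irreducibility theorem over $\qu$ to select $\mathbf{x}'\in\zet^{r-1}$ such that the substituted polynomial $f_{\mathbf{x}'}(X_1,\mathbf{T}):=f(X_1,\mathbf{x}',\mathbf{T})\in\zet[X_1,\mathbf{T}]$ is irreducible and has Galois group over $\qu(\mathbf{T})$ equal to $G$; such $\mathbf{x}'$ exists because Hilbert sets in $\zet^{r-1}$ are non-empty. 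Since $h$ does not involve the $X_j$, the leading $X_1$-coefficient of $f_{\mathbf{x}'}$ is still $h(\mathbf{T})$, so Corollary \ref{regen} applies to $f_{\mathbf{x}'}$ with the very same group $G$ and hence the same $\gamma_G$.

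The heart of the argument is a factorisation-preservation step: whenever $\mathbf{t}\in\zet^s$ satisfies $h(\mathbf{t})\ne 0$ and $f(X_1,\mathbf{X}',\mathbf{t})$ is reducible in $\qu[X_1,\mathbf{X}']$, the univariate polynomial $f_{\mathbf{x}'}(X_1,\mathbf{t})\in\qu[X_1]$ is already reducible. To see this, take a nontrivial factorisation $f(X_1,\mathbf{X}',\mathbf{t})=g_1(X_1,\mathbf{X}')\,g_2(X_1,\mathbf{X}')$ in $\qu[X_1,\mathbf{X}']$ and let $\ell_1,\ell_2\in\qu[\mathbf{X}']$ be the leading $X_1$-coefficients of $g_1,g_2$. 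Their product equals $h(\mathbf{t})\in\qu^\times$, which forces both $\ell_j$ to lie in $\qu^\times$; consequently both $g_j$ have positive $X_1$-degree with non-vanishing constant leading $X_1$-coefficients, and this positive degree is preserved under the substitution $\mathbf{X}'=\mathbf{x}'$, producing a nontrivial factorisation of $f_{\mathbf{x}'}(X_1,\mathbf{t})$ in $\qu[X_1]$. This is where the hypothesis on the shape of the top $X_i$-monomial is essential, and clearing it cleanly is the main technical obstacle.

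Assembling everything one obtains
\[
  J_f(H)\;\le\;\#\{\mathbf{t}\in\zet^s:|\mathbf{t}|\le H,\ h(\mathbf{t})=0\}\;+\;R_{f_{\mathbf{x}'}}(H).
\]
The first count is $O(H^{s-1})$ because $h$ is a non-zero polynomial in $\mathbf{T}$, and Corollary \ref{regen} applied to $f_{\mathbf{x}'}$ yields $R_{f_{\mathbf{x}'}}(H)\ll_{f,\varepsilon}H^{s-1+\gamma_G+\varepsilon}$. Since $G$ is transitive, every intransitive subgroup $K\le G$ is proper and therefore has $|G/K|\ge 2$, so $\gamma_G\le\tfrac12$; hence $s-1+\gamma_G\le s-\gamma_G$, and combining the two estimates gives the claimed bound $J_f(H)\ll_{f,\varepsilon}H^{s-\gamma_G+\varepsilon}$.
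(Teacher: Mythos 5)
Your proposal is correct and follows essentially the same route as the paper: specialise the variables $X_j$ ($j\ne i$) to integers chosen via Hilbert's irreducibility theorem so that the Galois group $G$ over $\qu(\mathbf{T})$ is preserved, observe that for $h(\mathbf{t})\ne 0$ any nontrivial factorisation survives this specialisation, and then invoke Corollary \ref{regen}. Your unit-leading-coefficient argument showing that both factors retain positive $X_i$-degree after substituting $\mathbf{X}'=\mathbf{x}'$ is in fact spelled out more carefully than in the paper's own (rather terse) treatment of that step.
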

Note that polynomials $f$ not satisfying the assumptions of Theorem
\ref{bodensee} regarding the form of the highest degree monomial
in one of
the variables can be brought into that form after applying a suitable
linear transformation on the variables $X_1, \ldots, X_r$, that
does not change the property of being reducible or irreducible
over the rationals. It seems difficult, though, to control how the
relevant $G$ and thus $\gamma_G$ change in this process.\\
As we have already remarked,
our approach roughly follows \cite{D}, using
auxiliary varieties based on suitable Galois resolvents,
and bounding the number of integral points on these varieties.
More care, however, has to be taken in constructing the Galois
resolvents in section \ref{bilfinger_berger} to guarantee their
irreducibility. In section \ref{bulldozer} we use a result
from the literature, stemming itself from an application of the
determinant method, to bound the number of integral points on
curves which is enough to deal with the case $s=1$. For $s>1$
we use a fibration approach to reduce to this special case
of curves. Theorem \ref{miami} along with
Corollaries \ref{mittwoch} and \ref{regen} and Theorem
\ref{bodensee} will then
be proved in sections \ref{tim}, \ref{dogs} and \ref{tom}.

\bigskip
\noindent
\textbf{Acknowledgments:} The authors would like to thank the anonymous
referees of a previous version of this article for several useful comments.

\section{Construction of the Galois resolvents}
\label{bilfinger_berger}
We will now give a construction of Galois resolvents,
polynomials that detect containment of the Galois group of a polynomial
in a prescribed group, as given in Lemma \ref{stress}. To this end we
first need some preparations. Keeping the notation from the introduction,
we observe that
the group $G$ acts on the roots of $f(X, \mathbf T)$ by permutations, and this gives rise to an injective homomorphism
$$
\rho: G \hookrightarrow S_n.
$$
For $\mathbf t = (t_1, \ldots, t_s) \in \zet^s$, write $G_{\mathbf t}$ for the Galois group of the splitting field of $f(X,\mathbf t)$. 
To make sense out of a comparison between $G_{\mathbf t}$ and subgroups of $G$, we construct an injection of $G_{\mathbf t}$ into $G$ that is compatible with the choice of enumeration of roots. In other words, we want the following diagram to commute.
\begin{equation}\label{eqn:GaloisGroupsDiagram}
\xymatrix{
G_{\mathbf t} \ar@{^{(}->}_{\rho_{\mathbf t}}[rd] \ar@{^{(}->}^{\iota}[r]
&G \ar@{^{(}->}^{\rho}[d]
\\
&S_n}
\end{equation}
\begin{lemma} \label{lem:UnramifiedImpliesEmbedding}
Suppose that $\mathbf t \in \zet^s$ satisfies the conditions
\begin{equation}\label{eqn:ConditionsUnramified}
\operatorname{deg} f(X, \mathbf t) = n \text{   and   } \Delta(\mathbf t) \neq 0,
\end{equation}
where $\Delta(\mathbf T)$ is the discriminant of $f(X,\mathbf T)$ viewed as a polynomial in $X$.
Then there exist injective homomorphisms $\iota$ and $\rho_{\mathbf t}$ such that the diagram in (\ref{eqn:GaloisGroupsDiagram}) commutes.
\end{lemma}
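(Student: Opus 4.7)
The plan is a classical specialization argument. Write $L = \qu(\mathbf T)$ and let $M \subseteq \overline{L}$ be the splitting field of $f(X, \mathbf T)$ over $L$, so that $G = \operatorname{Gal}(M/L)$ acts on the roots $\alpha_1, \ldots, \alpha_n$ via $\rho$. Let $c(\mathbf T)$ denote the leading coefficient of $f$ as a polynomial in $X$; the hypothesis $\deg f(X, \mathbf t) = n$ gives $c(\mathbf t) \neq 0$. With $\mathfrak m = (T_1 - t_1, \ldots, T_s - t_s)$, set $A = (\qu[\mathbf T][c^{-1}])_{\mathfrak m}$: this is a local ring with residue field $\qu$, in which every $\alpha_i$ is integral. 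Let $B$ be the integral closure of $A$ inside $M$; then $B$ is a finite, normal, semi-local $A$-algebra on which $G$ acts, with a single orbit on the set of maximal ideals.

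Fix a maximal ideal $\mathfrak P$ of $B$ and let $D = \operatorname{Stab}_G(\mathfrak P)$ be the decomposition group. Reducing the identity $f(X, \mathbf T) = c(\mathbf T) \prod_i (X - \alpha_i)$ modulo $\mathfrak P$ yields $f(X, \mathbf t) = c(\mathbf t) \prod_i (X - \overline{\alpha_i})$ in $(B/\mathfrak P)[X]$; the nonvanishing of $\Delta(\mathbf t)$ forces the $\overline{\alpha_i}$ to be distinct, hence, after relabelling, to coincide with the roots $\beta_1, \ldots, \beta_n$ of $f(X, \mathbf t)$. In particular the splitting field $K := \qu(\beta_1, \ldots, \beta_n)$ of $f(X, \mathbf t)$ is contained in $B/\mathfrak P$, and reduction modulo $\mathfrak P$ produces a natural homomorphism $\varphi : D \to \operatorname{Gal}(K/\qu) = G_{\mathbf t}$.

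The main step is to show $\varphi$ is an isomorphism; once this is done, $\iota := \varphi^{-1}$ composed with $D \hookrightarrow G$ gives the required embedding, and the diagram \eqref{eqn:GaloisGroupsDiagram} commutes because, under the bijection $\alpha_i \leftrightarrow \overline{\alpha_i} = \beta_i$, the permutation action of $\iota(\bar\tau)$ on the $\alpha_i$ coincides with the action of $\bar\tau$ on the $\beta_i$. To prove $\varphi$ is an isomorphism, note that $\Delta(\mathbf t) \neq 0$ makes $\Delta(\mathbf T)$ a unit in $A$, and a standard unramification criterion (applied to the monic polynomial $g(X) = \prod_i (X - \alpha_i) \in A[X]$, whose discriminant is a unit) then gives that the Galois extension $A \to B$ is étale at $\mathfrak m$. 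Hence $B \otimes_A \qu$ is a finite étale $\qu$-algebra, which forces $B/\mathfrak P_j = K$ at every maximal ideal $\mathfrak P_j$ of $B$ and hence $B \otimes_A \qu \cong K^k$, where $k$ is the number of maximal ideals; flatness of the étale extension then yields $k [K:\qu] = \operatorname{rank}_A B = [M:L] = |G|$, so $|D| = |G|/k = |G_{\mathbf t}|$. Surjectivity of $\varphi$ is classical in characteristic zero (residue extensions are automatically separable), and combined with this order equality it forces $\varphi$ to be an isomorphism. The principal technical point is the étaleness of $B$ over $A$ at $\mathfrak m$; once it is in hand, the rest is an order count and a routine diagram chase.
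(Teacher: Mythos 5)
Your proof is correct, but it takes a structurally different route from the paper. The paper specialises one parameter at a time: it works over the Dedekind domain $\qu(T_1,\ldots,T_{s-1})[T_s]$, observes that \eqref{eqn:ConditionsUnramified} makes the prime $(T_s-t_s)$ unramified in the splitting field, embeds the Galois group of $f(X,T_1,\ldots,T_{s-1},t_s)$ into $G$ via the decomposition group of a prime above $(T_s-t_s)$, checks compatibility with the root labelling by reduction mod that prime, and then iterates $s$ times. You instead carry out the whole specialisation $\mathbf T\mapsto\mathbf t$ in one step over the $s$-dimensional local ring $A=(\qu[\mathbf T][c^{-1}])_{\mathfrak m}$, using the decomposition group at a maximal ideal of the integral closure $B$ and the étaleness of $B/A$ (forced by $\Delta(\mathbf t)\neq 0$) to get both the embedding and the order count $|D|=|G_{\mathbf t}|$. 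What the paper's iteration buys is that every step happens over a Dedekind domain, where the identity $\sum e_if_i=[M:L]$ and the surjectivity of decomposition group onto residue Galois group are textbook facts; what your approach buys is uniformity and a single clean diagram chase, at the cost of needing the higher-dimensional input that a finite unramified extension of the normal local ring $A$ is automatically flat, hence étale (equivalently, that $B=A[\alpha_1,\ldots,\alpha_n]$ is free over $A$) --- this is the one step you assert rather than prove, and it deserves an explicit justification or reference (e.g.\ via the universal splitting algebra of the monic polynomial $g=f/c$, which is free of rank $n!$ over $A$ and étale precisely when $\operatorname{disc}(g)\in A^{\times}$), since without it the rank computation $k\,[K:\qu]=|G|$ does not follow. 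Note also that for the lemma as stated you only need injectivity of the composite $G_{\mathbf t}\to G$, so the surjectivity-plus-order-count argument proves slightly more than required.
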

\begin{proof} Consider the Dedekind domain $\zet(T_1, \cdots, T_{s-1})[T_s]$; the conditions (\ref{eqn:ConditionsUnramified}) imply that the prime $(T_s - t_s)$ is unramified in the splitting field of $f(X, \mathbf T)$. Choose a prime $\mathfrak p$ in the splitting field of $f(X, \mathbf t)$ lying above $(T_s - t_s)$, and the injection of the decomposition group of this prime into $G$ gives rise of an injection of the Galois group of $f(X,T_1, \cdots, T_{s-1}, t_s)$ over $\qu(T_1, \cdots, T_{s-1})$ (see for instance \cite[Section 1.7, pp. 20-21]{S}). Since the prime $\mathfrak p$ is unramified and the reduction $f(X, \mathbf T)$ modulo $\mathfrak p$ has degree $n$ in $X$, reduction mod $\mathfrak p$ sends $\alpha_i( \mathbf T)$ to $\alpha_i(T_1, \cdots, T_{s-1}, t_s)$.
Now suppose that $\sigma$ is an element of the Galois group of $f(X,T_1, \cdots, T_{s-1}, t_s)$ over $\qu(T_1, \cdots, T_{s-1})$, and suppose that the above injection sends $\sigma \mapsto \overline \sigma$ with
$$
\overline \sigma (\alpha_i(\mathbf T)) = \alpha_j(\mathbf T).
$$
The injection described above has the property that
$$
\sigma (\alpha_i(\mathbf T) \pmod{\mathfrak p}) = \alpha_j(\mathbf T) \pmod{\mathfrak p},
$$
So we can inject the Galois group of $f(X,T_1, \cdots, T_{s-1}, \theta_s)$ into $S_n$ by its action on the roots of $f(X, \mathbf T)$, which is precisely what we need for the diagram to commute. The proof is completed by repeating this procedure one parameter at a time.
\end{proof}

\begin{lemma}
\label{ref}
Let $n \in \en$ and $z_1, \ldots, z_n, w_1, \ldots, w_n \in \ce$.
Suppose that
\[
  z_1^k + \ldots + z_n^k = w_1^k + \ldots + w_n^k
\]
for all $k \in \{1, \ldots, n\}$. Then $\{z_1, \ldots, z_n\}
=\{w_1, \ldots, w_n\}$, i.e. the $z_i$ are a permutation of the
$w_i$ and vice versa.
\end{lemma}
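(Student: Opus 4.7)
The plan is to use the classical link between power sums and elementary symmetric polynomials given by Newton's identities, and then appeal to unique factorisation in $\ce[X]$.

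More precisely, write $p_k = z_1^k + \cdots + z_n^k$ and $p'_k = w_1^k + \cdots + w_n^k$, and let $e_k = e_k(z_1, \ldots, z_n)$ and $e'_k = e_k(w_1, \ldots, w_n)$ be the elementary symmetric polynomials in the $z_i$ and the $w_i$, respectively. Newton's identities state that for $1 \le k \le n$,
\[
  p_k - e_1 p_{k-1} + e_2 p_{k-2} - \cdots + (-1)^{k-1} k e_k = 0,
\]
and the analogous relation holds for $p'_k$ and $e'_k$. The hypothesis gives $p_k = p'_k$ for $1 \le k \le n$, so I would proceed by induction on $k$: the case $k=1$ is immediate since $e_1 = p_1 = p'_1 = e'_1$, and for the inductive step the Newton relation expresses $k e_k$ as a polynomial in $p_1, \ldots, p_k$ and $e_1, \ldots, e_{k-1}$, with the same formula applying to the primed quantities. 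Hence $e_k = e'_k$ for all $k \in \{1, \ldots, n\}$.

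Once this is established, the two monic polynomials
\[
  \prod_{i=1}^n (X - z_i) = X^n - e_1 X^{n-1} + \cdots + (-1)^n e_n
\]
and $\prod_{i=1}^n (X - w_i) = X^n - e'_1 X^{n-1} + \cdots + (-1)^n e'_n$ coincide. By unique factorisation in $\ce[X]$, their multisets of roots agree, which gives the desired equality $\{z_1, \ldots, z_n\} = \{w_1, \ldots, w_n\}$ as multisets.

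There is no real obstacle here: the only thing to be slightly careful about is that Newton's identities in the form above are valid over any field of characteristic zero (in particular over $\ce$), so the recursive solution for $e_k$ in terms of $p_1, \ldots, p_k$ and $e_1, \ldots, e_{k-1}$ never requires inverting an integer that could vanish. Thus the induction goes through cleanly and the lemma follows.
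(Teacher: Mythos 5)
Your proof is correct and is exactly the argument the paper has in mind: the paper simply cites this as "a well known result going back to Newton," and your write-up via Newton's identities (using characteristic zero to solve recursively for the $e_k$, then comparing the two monic polynomials) is the standard proof of that fact.
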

\begin{proof}
This is a well known result going back to Newton.
\end{proof}
\begin{lemma}
\label{resolvent}
Let $n \in \en$, let $K$ be a subgroup of $S_n$, and let
$\alpha_1, \ldots, \alpha_n \in \ce$ be distinct.
Further, let $\{\sigma_1, \ldots, \sigma_m\}$ be a set of coset
representatives for $S_n/K$, where $m=|S_n/K|$. Then there exist
$e_1, \ldots, e_n \in \en$, $d_1, \ldots, d_{|K|} \in
\en$
and $\gamma \in \zet$
such that all complex numbers
\begin{align}
\label{sonne}
  z_i & = \sum_{k=1}^{|K|} d_k \sum_{\tau \in K}
  (\alpha_{\sigma_i(\tau(1))}+\gamma)^{ke_1}
  (\alpha_{\sigma_i(\tau(2))}+\gamma)^{ke_2} \cdots
  (\alpha_{\sigma_i(\tau(n))}+\gamma)^{ke_n}\\
  & (1 \le i \le m) \nonumber
\end{align}
are distinct.
\end{lemma}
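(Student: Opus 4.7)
The plan is to recast the expression for $z_i$ in (\ref{sonne}) in terms of the $n!$ monomial values $M_{\pi}:=\prod_{j=1}^n(\alpha_{\pi(j)}+\gamma)^{e_j}$ for $\pi\in S_n$, so that $z_i=\sum_{k=1}^{|K|}d_k A_{i,k}$ with $A_{i,k}=\sum_{\tau\in K}M_{\sigma_i\tau}^{\,k}$. The crucial reduction is: if one can arrange that all $n!$ complex numbers $M_\pi$ are pairwise distinct, then the multisets $\{M_{\sigma_i\tau}\}_{\tau\in K}$ indexed by the cosets $\sigma_i K$ are disjoint, hence unequal, and applying Lemma \ref{ref} with $|K|$ in place of $n$ then forces some $k\in\{1,\ldots,|K|\}$ to witness $A_{i,k}\neq A_{i',k}$ for each $i\neq i'$; in other words, the $m$ rows of the matrix $(A_{i,k})$ are pairwise distinct vectors in $\ce^{|K|}$. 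Once this is granted, a standard generic-linear-combination argument finishes: for each $i\neq i'$, $z_i-z_{i'}$ is a nonzero linear form in $d_1,\ldots,d_{|K|}$, and one avoids the finite union of the $\binom{m}{2}$ resulting hyperplanes by choosing suitable $d_k\in\en$.

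The real work therefore concentrates on choosing $\mathbf{e}\in\en^n$ and $\gamma\in\zet$ that make the $M_\pi$ pairwise distinct. First I will pick $\mathbf{e}$ satisfying, for every pair of distinct permutations $\pi,\pi'\in S_n$, the linear condition
\[
  \sum_{j=1}^n e_j\bigl(\alpha_{\pi(j)}-\alpha_{\pi'(j)}\bigr)\neq 0.
\]
Since the $\alpha_j$ are distinct, the coefficient vector of each such form is nonzero, so the condition excludes only finitely many proper hyperplanes from $\en^n$ and an admissible $\mathbf{e}$ exists (and in particular will have pairwise distinct entries). Expanding $M_\pi$ as a polynomial in $\gamma$, the coefficient of $\gamma^{\sum_j e_j}$ is $1$ independently of $\pi$, while the coefficient of $\gamma^{\sum_j e_j-1}$ is exactly $\sum_j e_j\alpha_{\pi(j)}$. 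Consequently, for each pair $\pi\neq\pi'$ the polynomial $M_\pi-M_{\pi'}\in\ce[\gamma]$ has a nonvanishing subleading coefficient and is therefore a nonzero polynomial in $\gamma$ with only finitely many complex roots; choosing $\gamma\in\zet$ outside the union of these root sets over all pairs produces the desired distinctness.

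The main obstacle I anticipate is precisely this joint choice of $\mathbf{e}$ and $\gamma$: the top-degree coefficient of $M_\pi$ in $\gamma$ is structurally independent of $\pi$, so one must descend one step and extract a nonvanishing condition from the subleading coefficient, where the distinctness of the $\alpha_j$ and the freedom to pick an integer $\gamma$ finally cooperate. Everything else reduces to routine avoidance of finite unions of proper affine hyperplanes in $\en^n$, $\zet$, and $\en^{|K|}$.
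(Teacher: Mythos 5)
Your proof is correct, and its skeleton coincides with the paper's: first make the monomial values attached to permutations in different cosets pairwise distinct by a suitable choice of $\mathbf{e}$ and $\gamma$; then invoke Newton's identities (Lemma \ref{ref}, applied with $|K|$ in place of $n$) to find, for each pair of distinct cosets, a power $k$ separating the corresponding power sums; and finally choose $\mathbf{d}$ by avoiding finitely many proper hyperplanes to separate the $z_i$. The last two steps are essentially identical to the paper's. Where you genuinely differ is in the first step. The paper fixes $\gamma$ first, taking it large enough that an equation of the form $\left((\alpha_s+\gamma)/(\alpha_t+\gamma)\right)^{e}=c$ has at most one solution $e\le H$, and then counts: the bad exponent vectors in the box $[1,H]^n$ number $O(H^{n-1})$ per pair of cosets, so a good $\mathbf{e}$ exists once $H$ is large. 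You instead fix $\mathbf{e}$ first by avoiding the finitely many proper hyperplanes $\sum_j e_j\bigl(\alpha_{\pi(j)}-\alpha_{\pi'(j)}\bigr)=0$ (proper because the $\alpha_j$ are distinct), and then observe that for such an $\mathbf{e}$ the difference $M_\pi-M_{\pi'}$ is a nonzero polynomial in $\gamma$, since its subleading coefficient is exactly that nonvanishing linear form; an integer $\gamma$ avoiding its finitely many roots, over all pairs, then works. This is a clean deterministic alternative to the paper's counting argument, and it yields the slightly stronger conclusion that all $n!$ monomials $M_\pi$ are pairwise distinct, whereas the paper only needs, and only proves, distinctness across distinct cosets. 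Both arguments are complete; yours trades the implicit root-of-unity consideration behind the paper's choice of a large $\gamma$ for the explicit observation about the coefficient of $\gamma^{E-1}$.
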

\begin{proof}
For convenience, let us introduce the notation
\begin{align*}
  \mathbf{e} &= (e_1, \ldots, e_n), \\
  w_{i, \tau, \mathbf{e}, k, \gamma} &=
  (\alpha_{\sigma_i(\tau(1))}+\gamma)^{ke_1} \cdots
  (\alpha_{\sigma_i(\tau(n))}+\gamma)^{ke_n}, \\
  w_{i, \tau, \mathbf{e}, \gamma} &= w_{i, \tau, \mathbf{e}, 1, \gamma}.
\end{align*}
We now show that it is possible to choose $\gamma \in \zet$ and
$e_1, \ldots, e_n \in \en$ in such a way that
\begin{equation}
\label{lasagne}
  w_{i, \tau_1, \mathbf{e}, \gamma} \ne w_{j, \tau_2, \mathbf{e}, \gamma}
\end{equation}
for all $(i, \tau_1)$ and $(j, \tau_2)$ where $i \ne j$ and
$\tau_1, \tau_2 \in K$.
The condition
\[
  w_{i, \tau_1, \mathbf{e}, \gamma} = w_{j, \tau_2, \mathbf{e}, \gamma}
\]
is equivalent to
\[
  \left(
    \frac{\alpha_{\sigma_i(\tau_1(1))}+\gamma}
   {\alpha_{\sigma_j(\tau_2(1))}+\gamma}
  \right)^{e_1}
  \cdots
  \left(
    \frac{\alpha_{\sigma_i(\tau_1(n))}+\gamma}
   {\alpha_{\sigma_j(\tau_2(n))}+\gamma}
  \right)^{e_n}
  = 1,
\]
providing all denominators are different from zero.
Since $i \ne j$, at least one exponent
$e_\ell$ must be attached to a fraction of the form $\frac{\alpha_s+\gamma}
{\alpha_t+\gamma}$ where $\alpha_s \ne \alpha_t$.
Suppose that all the other exponents $e_m$ where $m \ne \ell$ are fixed.
Then we are left with an equation of the form
\begin{equation}
\label{samso}
  \left( \frac{\alpha_s+\gamma}{\alpha_t+\gamma} \right)^{e_\ell} = c
\end{equation}
for some $c \in \ce$. Now choose $\gamma \in \zet$ large enough,
in terms of a sufficiently large parameter $H$ only depending on $n$, such that
\eqref{samso} has at most one
solution $e_\ell \in \en$ with $e_\ell \le H$, for all possible choices of
$\alpha_s \ne \alpha_t$, $\ell$ and $c \in \ce$.
For this fixed $\gamma$, we have shown that for all
tuples $(i, \tau_1)$ and $(j, \tau_2)$ where $i \ne j$, we have
\[
  \#\{e_1, \ldots, e_n \in \en: e_\ell \le H \, (1 \le \ell \le n)
  \mbox{ and }
  w_{i,\tau_1,\mathbf{e},\gamma}=w_{j,\tau_2,\mathbf{e},\gamma}\}
  \ll H^{n-1}.
\]
Since there are only $O_n(1)$ many possibilities to choose
$(i, \tau_1)$ and $(j, \tau_2)$ with $i \ne j$, but there are
$\gg H^n$ vectors $\mathbf{e} \in \en^n$
where $e_\ell \le H \; (1 \le \ell \le n)$, by choosing
$H$ sufficiently large we certainly can find such an exponent vector
$\mathbf{e} \in \en^n$ for which \eqref{lasagne} holds true.
Now fix that vector $\mathbf{e} \in \en^n$ and $\gamma$, and let us write
\[
  v_{i, k} = \sum_{\tau \in K} w_{i, \tau, \mathbf{e}, k, \gamma} \quad
  (1 \le i \le m, 1 \le k \le |K|).
\]
If $i \ne j$, then there exists at least one $k \in \{1, \ldots, |K|\}$
such that $v_{i, k} \ne v_{j, k}$: By Lemma \ref{ref} the conditions
$v_{i, k}=v_{j, k} \; (1 \le k \le |K|)$ would imply
\[
  \{w_{i, \tau, \mathbf{e}, \gamma} : \tau \in K\} =
  \{w_{j, \tau, \mathbf{e}, \gamma} : \tau \in K\},
\]
contradicting \eqref{lasagne}. The complex numbers in \eqref{sonne}
are now exactly of the form
\[
  z_i = \sum_{k=1}^{|K|} d_k v_{i, k} \quad (1 \le i \le m).
\]
To make them distinct, it is enough to choose $d_1, \ldots, d_{|K|} \in \en$
in such a way that
\begin{equation}
\label{schnee}
  \sum_{k=1}^{|K|} d_k (v_{i,k}-v_{j,k}) \ne 0
\end{equation}
whenever $i \ne j$. As shown above, for $i \ne j$ there is at least one
non-zero coefficient on the left hand side of \eqref{schnee}, whence
\begin{align*}
  & \#\{d_1, \ldots, d_{|K|} \in \en^{|K|}:
  \sum_{k=1}^{|K|} d_k (v_{i,k}-v_{j,k})=0
  \mbox{ and } d_k \le H \; (1 \le k \le |K|)\} \\
  & \ll H^{|K|-1}.
\end{align*}
We can now conclude in a similar way as above: Since there are $O_n(1)$
many possibilities to choose $i$ and $j$ where $i \ne j$, but
there are $\gg H^{|K|}$ many vectors $\mathbf{d} \in \en^{|K|}$ where
$d_k \le H \; (1 \le k \le |K|)$, by choosing $H$ sufficiently large
we can find a vector $\mathbf{d} \in \en^{|K|}$ such that \eqref{schnee}
is true whenever $i \ne j$. This finishes the proof.
\end{proof}
\begin{lemma}
\label{stress}
Let $n \in \en$, and let
\[
   f(X, \mathbf{T}) = X^n + g_1(\mathbf{T}) X^{n-1} + \ldots +
   g_n(\mathbf{T})
\]
where $g_i \in \zet[T_1, \ldots, T_s] \; (1 \le i \le n)$.
Suppose that $f(X)=f(X, \mathbf{T})$,
considered as a polynomial in the ring $\qu(\mathbf{T})[X]$,
has distinct roots
$\alpha_1=\alpha_1(\mathbf{T}), \ldots,
\alpha_n=\alpha_n(\mathbf{T})$ in the algebraic
closure $\overline{\qu(\mathbf{T})}$ of $\qu(\mathbf{T})$,
and let $G$ be the Galois
group of the corresponding splitting field operating on
$\alpha_1(\mathbf{T}), \ldots, \alpha_n(\mathbf{T})$.
Moreover, let $K$ be a subgroup of $G$.
Then there exists a Galois resolvent
$\Phi_{f, K}$ with the following properties:
\begin{itemize}
  \item[(i)] $\Phi_{f, K}$ is a polynomial of the form
  \begin{equation}
  \label{application}
     \Phi_{f, K}(Z, \mathbf{T}) = Z^m + h_1(\mathbf{T}) Z^{m-1} + \ldots +
     h_m(\mathbf{T}),
  \end{equation}
  where $m=|S_n/K|$ and $h_i \in \zet[T_1, \ldots, T_s]
  \; (1 \le i \le m)$.
  \item[(ii)] If one specialises the parameters $T_1, \ldots, T_s$
  in $f(X, \mathbf{T})$ to any $s$-tuple of
  integers $\mathbf t = (t_1 , \ldots, t_s)$, then if the splitting field of the polynomial
  $f(X)=f(X, \mathbf{t})$ over $\qu$ has Galois group $K$, then
  $\Phi_{f, K}(Z)=\Phi_{f, K}(Z, \mathbf{t})$ has an integer root $z$.
  \item[(iii)] If one factorises $\Phi_{f, K}(Z, \mathbf{T})$ over
  $\qu[Z, T_1, \ldots, T_s]$
  into irreducible factors, then each factor has degree
  at least $\frac{|G|}{|K|}$ in $Z$.
\end{itemize}
\end{lemma}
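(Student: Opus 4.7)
The plan is to apply Lemma \ref{resolvent} to the generic roots $\alpha_1(\mathbf{T}), \ldots, \alpha_n(\mathbf{T}) \in \overline{\qu(\mathbf{T})}$ of $f$ (embedded into $\ce$ via any embedding of $\qu(\mathbf{T})$) to produce distinct values $z_1, \ldots, z_m$, and then simply to set
\[
  \Phi_{f, K}(Z, \mathbf{T}) = \prod_{i=1}^{m} (Z - z_i).
\]
I would fix coset representatives $\sigma_1, \ldots, \sigma_m$ of $S_n/K$ with $\sigma_1 = \operatorname{id}$, and then verify the three properties in turn.

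For (i), the key observation is that the multiset $\{z_1, \ldots, z_m\}$ is invariant under the action of $S_n$ on the indices of the $\alpha_j$: applying $\pi \in S_n$ to $z_i$ yields $z_{i'}$ with $\sigma_{i'} K = \pi \sigma_i K$. Hence each coefficient $h_i$ is an $S_n$-symmetric polynomial expression with integer coefficients in $\alpha_1(\mathbf{T}), \ldots, \alpha_n(\mathbf{T})$, and the integer form of the fundamental theorem on symmetric polynomials writes it as an element of $\zet[g_1(\mathbf{T}), \ldots, g_n(\mathbf{T})] \subseteq \zet[\mathbf{T}]$. Securing integrality, as opposed to mere rationality, is the main technical delicacy at this step.

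For (ii), I would use the embedding from Lemma \ref{lem:UnramifiedImpliesEmbedding} to identify $G_{\mathbf{t}}$ with $K$ as a subgroup of $S_n$, assuming the specialisation has Galois group $K$. Because $\sigma_1 = \operatorname{id}$, the element $z_1(\mathbf{t})$ is an integer-coefficient sum of terms of the form $\prod_j (\alpha_{\tau(j)}(\mathbf{t}) + \gamma)^{k e_j}$ over $\tau \in K$; the action of $K$ on the $\alpha_j(\mathbf{t})$ merely reindexes this sum, so $z_1(\mathbf{t})$ is $K$-invariant and lies in $\qu$. Since $f(X, \mathbf{t})$ is monic with integer coefficients, each $\alpha_j(\mathbf{t})$ is an algebraic integer, which forces $z_1(\mathbf{t}) \in \zet$, giving the required integer root of $\Phi_{f, K}(Z, \mathbf{t})$.

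For (iii), Gauss's lemma ensures that, since $\Phi_{f, K}$ is monic in $Z$ with coefficients in $\zet[\mathbf{T}]$, its irreducible factorisation in $\qu[Z, T_1, \ldots, T_s]$ coincides with that in $\qu(\mathbf{T})[Z]$, whose irreducible factors correspond to the orbits of $G$ acting (through its embedding into $S_n$) on $\{z_1, \ldots, z_m\} \subseteq \overline{\qu(\mathbf{T})}$. Because the $z_i$ are pairwise distinct, the full $S_n$-stabiliser of $z_i$ is exactly $\sigma_i K \sigma_i^{-1}$, so the $G$-stabiliser $G \cap \sigma_i K \sigma_i^{-1}$ has order at most $|K|$, and each orbit size --- hence each irreducible factor's degree in $Z$ --- is at least $|G|/|K|$. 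The principal hurdle across the whole proof is (i), namely ensuring that the $h_i$ genuinely lie in $\zet[\mathbf{T}]$ rather than merely $\qu[\mathbf{T}]$.
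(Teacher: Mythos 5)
Your proposal is correct and follows essentially the same route as the paper: define $\Phi_{f,K}=\prod_{i=1}^m(Z-z_i)$ via Lemma \ref{resolvent}, get (i) from the $S_n$-invariance of the multiset of $z_i$ and the integral fundamental theorem on symmetric functions, get (ii) from $K$-invariance of the $z_i$ attached to the identity coset plus algebraic integrality, and get (iii) from the distinctness of the $z_i$ and the computation $\mathrm{Stab}_{S_n}(z_i)=\sigma_iK\sigma_i^{-1}$. The only cosmetic differences are that you invoke an embedding of $\overline{\qu(\mathbf{T})}$ into $\ce$ where the paper specialises $\mathbf{T}$ to complex values, you retain $\gamma$ where the paper normalises $\gamma=0$, and you phrase (iii) via orbit--stabiliser where the paper counts elements mapping $z_{i_1}$ into the root set of a putative small factor.
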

\begin{proof}
Since the roots $\alpha_1(\mathbf{T}), \ldots,
\alpha_n(\mathbf{T})$ are distinct, it is
possible to specialise $T_1, \ldots, T_s$ to an $s$-tuple $\mathbf{t}$ of
complex numbers such that the complex
roots $\alpha_1=\alpha_1(\mathbf{t}), \ldots,
\alpha_n=\alpha_n(\mathbf{t})$ of
$f(X)=f(X, \mathbf{t})$ are all distinct. We are therefore in a position
to invoke Lemma \ref{resolvent}. Keeping the notation from that lemma, we find
$e_1, \ldots, e_n \in \en$ and $d_1, \ldots, d_{|K|} \in \en$,
and a $\gamma \in \zet$,
such that all the numbers $z_i$ in \eqref{sonne} are distinct.
Now since replacing the variable $X$ by $X-\gamma$ does not change
the splitting field and thus does not change the
Galois group of $f(X, \mathbf{T})$ over $\qu(\mathbf{T})$,
and also for fixed $\mathbf{t} \in \zet^s$ does not change the Galois
group of $f(X, \mathbf{t})$ over $\qu$, we can without loss of generality
assume that $\gamma=0$.
We now define $\Phi_{f, K}(Z, \mathbf{T})$ to be
\begin{equation}
\label{wortschatz}
  \Phi_{f, K}(Z, \mathbf{T}) = \prod_{i=1}^m (Z-z_i)
  = \prod_{i=1}^m
  \left( Z - \sum_{k=1}^{|K|} d_k \sum_{\tau \in K}
  \alpha_{\sigma_i(\tau(1))}^{ke_1} \cdots
  \alpha_{\sigma_i(\tau(n))}^{ke_n} \right),
\end{equation}
where $\{\sigma_1, \ldots, \sigma_m\}$ is a set of coset representatives
for $S_n/K$.\\
It is important to keep in mind that by construction the
$z_i=z_i(\mathbf{T})$ are distinct, since we can specialise
$\mathbf{T}$ in such a way to end up with distinct complex $z_i$.
Expanding the expression \eqref{wortschatz}, it becomes transparent that
$\Phi_{f, K}(Z, \mathbf{T})$
is of the form \eqref{application}, where the $h_i$
are symmetric polynomials in the $z_i$ with integer coefficients.
Any permutation of the $\alpha_i$ just permutes the $z_i$, so the
$h_i$ are symmetric polynomials in the $\alpha_i$ as well,
with integer coefficients.
Hence, by the Fundamental Theorem on symmetric functions, the $h_i$ are
integer polynomials
in the elementary symmetric polynomials in
$\alpha_1, \ldots, \alpha_n$, which in turn by Vieta's Theorem are of the
form $\pm g_i$. This shows that the $h_i$ are integer polynomials in
$T_1, \ldots, T_s$ and confirms (i).\\
For the proof of (ii) and (iii) we first note that the symmetric group
$S_n$ operates on the $z_i$ via
\[
  \varrho(z_i) = \sum_{k=1}^{|K|} d_k \sum_{\tau \in K}
  \alpha_{\varrho(\sigma_i(\tau(1)))}^{ke_1} \cdots
  \alpha_{\varrho(\sigma_i(\tau(n)))}^{ke_n}
\]
for all $\varrho \in S_n$.\\
To show (ii), fix any $\mathbf{t} \in \zet^s$ and consider
\[
  \tilde{z} = \sum_{k=1}^{|K|} d_k
  \sum_{\tau \in K} \alpha_{\tau(1)}^{ke_1} \cdots
  \alpha_{\tau(n)}^{ke_n}.
\]
Choosing the $\sigma_i$ which is in the same coset of $S_n/K$
as the identity map, one finds that $\tilde{z}$ is one of the $z_i$
occurring on the left hand side of \eqref{wortschatz}.
Now suppose that $f(X)=f(X,\mathbf{t})$ has Galois group $K$ over $\qu$.
Clearly, $\tau(\tilde{z})=\tilde{z}$ for all $\tau \in K$. This shows
that $\tilde{z} \in \qu$. Moreover, $\tilde{z}$ is a root of the monic
integer
polynomial $\Phi_{f,K}(Z, \mathbf{t})$, whence the stronger conclusion
$\tilde{z} \in \zet$ follows. This finishes the proof of (ii).\\
For the proof of (iii), it is useful to work over the function field
$\qu(\mathbf{T})$ rather than over $\qu$.
As observed above, the $z_i=z_i(\mathbf{T})$ are then
\emph{distinct} elements
of the algebraic closure $\overline{\qu(\mathbf{T})}$ of
$\qu(\mathbf{T})$.
As a consequence, we obtain
\[
  \mbox{Stab}(z_i)=\{\varrho \in S_n: \varrho(z_i)=z_i\} =
  \sigma_i K \sigma_i^{-1} \quad (1 \le i \le m),
\]
which implies that
\begin{equation}
\label{absaugen}
  \{\varrho \in S_n: \varrho(z_i)=z_j\} = \sigma_j \sigma_i^{-1}
  \mbox{Stab}(z_i) = \sigma_j K \sigma_i^{-1} \quad
  (1 \le i,j \le m).
\end{equation}
These observations are crucial for the following argument: Suppose that
$\Phi_{f, K}(Z, \mathbf{T})$ factorises over $\qu$ into two factors
$\Phi_1, \Phi_2 \in \qu[Z, T_1, \ldots, T_s]$, i.e.
\begin{equation}
\label{lipom}
  \Phi_{f, K}(Z, \mathbf{T}) = \Phi_1(Z, \mathbf{T}) \Phi_2(Z, \mathbf{T}).
\end{equation}
We can consider $\Phi_{f, K}(Z)=\Phi_{f, K}(Z, \mathbf{T})$
as a monic rational
polynomial in $Z$ over $\qu(\mathbf{T})$, and analogously for $\Phi_1(Z)=
\Phi_1(Z, \mathbf{T})$. Now \eqref{wortschatz} provides a factorisation of
$\Phi_{f, K}(Z)$ over $\overline{\qu(\mathbf{T})}$ into factors of the form
$(Z-z_i)$. Suppose that $\Phi_1$ has degree $k$ in $Z$. Then by
\eqref{wortschatz}, \eqref{lipom} and uniqueness of factorisation,
$\Phi_1$ must be of the form
\[
  \Phi_1(Z) = c \cdot \prod_{j=1}^k (Z-z_{i_j})
\]
for suitable $c \in \qu$ and distinct $i_j \in \{1, \ldots, m\}$.
As shown in \eqref{absaugen}, we have
\[
  \{\varrho \in S_n: \varrho(z_{i_1}) = z_{i_l}\}
  = \sigma_{i_l} K \sigma_{i_1}^{-1}
\]
for all $l \in \{1, \ldots, k\}$. In particular, for given $l \in
\{1, \ldots, k\}$ there are exactly
$|K|$ elements in $S_n$ that map $z_{i_1}$ to $z_{i_l}$, hence there are
at most $k|K|$ many elements in $S_n$ that map $z_{i_1}$ to any root
$z_{i_l}$ of $\Phi_1$.
Therefore, if $|G|>k|K|$, then we can find an
element $\varrho \in G$ such that
\begin{equation}
\label{parkplatz}
  \varrho(z_{i_1}) \not \in \{z_{i_1}, \ldots,  z_{i_k}\},
\end{equation}
so $ \varrho(z_{i_1})$ is no root of $\Phi_1$,
as all the $z_i$ are distinct elements of $\overline{\qu(\mathbf{T})}$. 
Now the operation of
$G$ on the $z_i$ is that of field automorphisms of the splitting field
of $\Phi_{f, K}(Z)$ over $\qu(\mathbf{T})$. Such field automorphisms fix all
elements of the ground field $\qu(\mathbf{T})$
and therefore necessarily map any root of a polynomial over
$\qu(\mathbf{T})$ to another root of that polynomial.
As $\Phi_1(Z)=\Phi_1(Z, \mathbf{T})$ has coefficients in $\qu(\mathbf{T})$,
we conclude that $\varrho(z_{i_1}) \in \{z_{i_1}, \ldots,  z_{i_k}\}$.
This contradicts \eqref{parkplatz}.
Consequently, $|G|>k|K|$ is impossible. This way we obtain the lower
bound
\[
  k \ge \frac{|G|}{|K|}
\]
for the degree in $Z$ of any factor $\Phi_1$ of $\Phi_{f, K}$. This
finishes the proof of the lemma.
\end{proof}

\section{Bounding the number of integer points on curves and hypersurfaces}
\label{bulldozer}
\begin{lemma}
\label{kapstadt}
Let $f(X)=a_0X^n+a_1X^{n-1}+\ldots+a_n \in \ce[X]$. Then all roots
$z \in \ce$ of the equation $f(z)=0$ satisfy the inequality
\[
|z| \le \frac{1}{\sqrt[n]{2}-1} \cdot \max_{1 \le k \le n}
\sqrt[k]{\left| \frac{a_{k}}{a_0 {n \choose k}} \right|}.
\]
\end{lemma}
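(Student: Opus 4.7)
The proof will be a direct absolute-value estimate applied to the identity $a_0 z^n = -\sum_{k=1}^{n} a_k z^{n-k}$, which follows from $f(z)=0$. Assuming $a_0 \ne 0$ (otherwise the stated bound is vacuous, and the lemma would have to be read by passing to the actual leading coefficient), I would introduce the shorthand
\[
  M = \max_{1 \le k \le n} \sqrt[k]{\left| \frac{a_k}{a_0 {n \choose k}} \right|},
\]
so that, by the very definition of $M$, one has $|a_k| \le |a_0| \, {n \choose k} M^k$ for every $1 \le k \le n$.

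Next I apply the triangle inequality to the displayed identity and substitute these bounds:
\[
  |a_0| \, |z|^n \le \sum_{k=1}^{n} |a_k| \, |z|^{n-k} \le |a_0| \sum_{k=1}^{n} {n \choose k} M^k |z|^{n-k}.
\]
The key observation is that, by the binomial theorem, the right-hand sum is exactly $(|z|+M)^n - |z|^n$. Cancelling $|a_0|$ and rearranging then gives $2 |z|^n \le (|z|+M)^n$, whence $\sqrt[n]{2} \, |z| \le |z| + M$, so that $|z| \le M/(\sqrt[n]{2}-1)$, which is precisely the claimed inequality.

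There is no real obstacle here; the only point of attention is recognising that the weighting factor ${n \choose k}$ in the definition of $M$ is precisely what turns the raw triangle-inequality estimate into a clean binomial expansion, producing the tidy constant $(\sqrt[n]{2}-1)^{-1}$ rather than a cruder one such as $n$ or $2^n$.
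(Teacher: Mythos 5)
Your proof is correct. The paper itself does not prove this lemma at all --- it simply cites Theorem 3 in \S 27 of Marden's \emph{Geometry of polynomials} --- so you have supplied the standard elementary argument behind that citation. Every step checks out: the bound $|a_k| \le |a_0|\binom{n}{k}M^k$ follows immediately from the definition of $M$, the triangle inequality applied to $a_0 z^n = -\sum_{k=1}^n a_k z^{n-k}$ combined with the binomial theorem yields $2|z|^n \le (|z|+M)^n$, and extracting $n$-th roots gives the stated constant $(\sqrt[n]{2}-1)^{-1}$. Your side remark about the role of the weights $\binom{n}{k}$ is exactly the right way to see why this particular normalisation of the coefficients appears in the statement.
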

\begin{proof}
This is Theorem 3 in \S27 of \cite{M}.
\end{proof}
\begin{lemma}
\label{athen}
Let $F \in \zet[X_1, X_2]$ be irreducible and of degree $d$.
Further, let $P_1, P_2$ be real numbers such that $P_1, P_2 \ge 1$, and let
\[
  N(F; P_1, P_2) = \#\{\mathbf{x} \in \zet^2:
  F(\mathbf{x})=0
  \mbox{ and } |x_i| \le P_i \; (1 \le i \le 2)\}.
\]
Moreover, let
\[
  T = \max \left\{ \prod_{i=1}^2 P_i^{e_i} \right\}
\]
with the maximum taken over all integer $2$-tuples $(e_1, e_2)$ for
which the corresponding monomial $X_1^{e_1} X_2^{e_2}$ occurs
in $F(X_1, X_2)$ with nonzero coefficient. Then
\begin{equation}
\label{pigs}
  N(F; P_1, P_2) \ll_{d, \epsilon}
  \max\{P_1, P_2\}^{\epsilon}
  \exp \left( \frac{\log P_1 \log P_2}{\log T} \right).
\end{equation}
\end{lemma}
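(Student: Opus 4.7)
The bound \eqref{pigs}, with its $\exp(\log P_1 \log P_2/\log T)$ shape and the Newton-polytope quantity $T$, is exactly of the form produced by the $p$-adic determinant method of Bombieri--Pila and Heath-Brown, in the refinement that is sensitive to the shape of the box and of the Newton polytope of $F$. My first step would therefore be to locate a ready-made form of this statement in the literature; the reference \cite{BH} flagged in the introduction is the natural candidate, in which case the proof collapses to a direct citation after matching the notation.

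Failing a clean reference, I would reconstruct the argument along the following lines. After discarding the trivial range where $\max\{P_1,P_2\}^{\epsilon}$ already dominates, fix a prime $p$ of size a small power of $\max\{P_1,P_2\}$ and partition the integer zeros of $F$ in the box according to their residue modulo $p$. For each nonsingular residue $\mathbf{a}$ with $F(\mathbf{a}) \equiv 0 \pmod{p}$, a Hensel-style analysis shows that the integer lifts satisfy strong $p$-adic congruences, so that for every monomial $X_1^{f_1}X_2^{f_2}$ of modest weight $P_1^{f_1}P_2^{f_2}$, the matrix obtained by evaluating these monomials at the lifts has entries with controlled $p$-adic valuation. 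A Vandermonde-type minor argument then produces, whenever the number of lifts exceeds the size of the chosen monomial set, a nonzero auxiliary polynomial $G$ vanishing on all the lifts. Irreducibility of $F$ together with the degree bound on $G$ forces $F \nmid G$, so B\'ezout's theorem gives a per-residue bound on the intersection $F=0 \cap G=0$; summing over the $\ll p^{O(1)}$ relevant residues is absorbed into the $\max\{P_1,P_2\}^{\epsilon}$ factor.

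The delicate part, and the main obstacle, is the balancing: one selects the auxiliary monomials $X_1^{f_1}X_2^{f_2}$ subject to a weight constraint $P_1^{f_1}P_2^{f_2} \le M$, where $M$ is calibrated against the maximal monomial weight $T$ attained by $F$. The relevant lattice-point count under the hyperbola $f_1 \log P_1 + f_2 \log P_2 \le \log M$ is of order $(\log M)^2/(2\log P_1 \log P_2)$. Choosing $M$ just below the threshold at which $F$ itself would be forced into the span of the auxiliary monomials (so $M \asymp T^{1-o(1)}$) and optimising produces the stated exponent $\log P_1 \log P_2/\log T$. Getting this exponent on the nose, rather than losing polynomial factors of $T$ at the boundary of the Newton polytope, is the subtle step; it is precisely the Newton-polytope refinement of the Bombieri--Pila bound which reduces to $P^{1/d}$ in the symmetric case $P_1=P_2=P$, $T=P^d$.
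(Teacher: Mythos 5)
Your first instinct is exactly what the paper does: the lemma is quoted verbatim as Theorem 1 of \cite{BH} (with Lemma 8 of \cite{D} cited for details), so the proof is a direct citation. Your sketch of the underlying $p$-adic determinant method is a fair account of how that result is actually proved in \cite{BH}, but no reconstruction is needed here.
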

\begin{proof}
This is Theorem 1 in \cite{BH}; see also Lemma 8 in \cite{D} for more
details.
\end{proof}
It is crucial for our application of Lemma \ref{athen} in proving the
following result that the bound \eqref{pigs}
only depends on the degree $d$ of $F$, but not on its coefficients.
\begin{lemma}
\label{baer_maus}
Let $F(Z; T_1, \ldots, T_s) \in \zet[Z; T_1, \ldots, T_s]$ be irreducible,
and suppose that $F$ is monic of degree $m$ in $Z$. Further, let
\begin{align*}
  M_F(H) = \#\{& \mathbf{t} \in \zet^s: |\mathbf{t}| \le H
  \text{ such that}\\
  & F(Z; t_1, \ldots, t_s)=0 \text{ has an integer root $z$}\}.
\end{align*}
Then for every $\varepsilon>0$ we have
\begin{equation}
\label{obelisk_pond}
  M_F(H) \ll_{F, \varepsilon} H^{s-1+1/m+\varepsilon}.
\end{equation}
\end{lemma}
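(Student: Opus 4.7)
My approach is to induct on the number of parameters $s$.

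For the base case $s=1$, I will apply Lemma \ref{kapstadt} to the monic polynomial $F(Z,t_1)$ to bound any integer root by $|z|\ll_F H^{C_F}$, where $C_F$ depends only on the degrees and coefficients of $F$. Then I will apply Lemma \ref{athen} to the irreducible $2$-variable polynomial $F\in\zet[Z,T_1]$ with $P_1=O(H^{C_F})$ and $P_2=H$; since $F$ is monic in $Z$ of degree $m$, the monomial $Z^m$ contributes $T\ge P_1^m$, giving
\[
  M_F(H)\ll H^{\varepsilon}\exp\!\left(\frac{\log P_1\cdot \log P_2}{\log T}\right)\ll H^{1/m+\varepsilon}.
\]

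For the inductive step $s\ge 2$, I will fibrate over $\mathbf{t}'=(t_1,\ldots,t_{s-1})\in\zet^{s-1}$ with $|\mathbf{t}'|\le H$, writing $M_F(H)\le\sum_{\mathbf{t}'}N(\mathbf{t}')$ where $N(\mathbf{t}')=\#\{t_s:|t_s|\le H\text{ and } F_{\mathbf{t}'}(Z,t_s)\text{ has an integer root}\}$ for $F_{\mathbf{t}'}(Z,T_s):=F(Z,\mathbf{t}',T_s)\in\zet[Z,T_s]$. Since $F_{\mathbf{t}'}$ is monic in $Z$, Gauss's lemma lets me factor it in $\zet[Z,T_s]$ into monic irreducible factors $G_1^{(\mathbf{t}')},\ldots,G_\ell^{(\mathbf{t}')}$ of degrees $m_1,\ldots,m_\ell$ in $Z$ with $\sum m_j=m$. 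Applying Lemma \ref{athen} uniformly to each factor $G_j^{(\mathbf{t}')}$ of degree $m_j\ge 2$ (crucially using that the implicit constant depends only on the degree of the factor, not on its coefficients) gives $\ll H^{1/m_j+\varepsilon}$ integer pairs $(z,t_s)$, while a factor $Z-p(T_s)$ of degree $1$ contributes $\ll H$ pairs trivially. For the $\mathbf{t}'$ where $F_{\mathbf{t}'}$ is irreducible of degree $m$, the per-fiber count is $\ll H^{1/m+\varepsilon}$, and summing over the $O(H^{s-1})$ such $\mathbf{t}'$ already gives the desired $\ll H^{s-1+1/m+\varepsilon}$.

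The main obstacle will be the reducible fibers, which can each contribute up to $O(H)$ whenever $F_{\mathbf{t}'}$ acquires a linear factor. The plan is to show that the number of such $\mathbf{t}'\in\zet^{s-1}\cap[-H,H]^{s-1}$ is $\ll_{F,\varepsilon}H^{s-2+1/m+\varepsilon}$, so that the reducible contribution is at most $O(H)\cdot H^{s-2+1/m+\varepsilon}\ll H^{s-1+1/m+\varepsilon}$. The starting point is that, by Gauss's lemma, $F$ is irreducible in $\qu(\mathbf{T}')[Z,T_s]$, so for $m\ge 2$ the identity $F(p(T_s),\mathbf{t}',T_s)\equiv 0$ in $T_s$ (with $p\in\zet[T_s]$ of degree bounded in terms of $F$) imposes a nontrivial polynomial system on $(\mathbf{y},\mathbf{t}')$, where $\mathbf{y}$ are the coefficients of $p$, cutting out a proper subvariety of the ambient affine space. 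Bounding integer points on its $\mathbf{t}'$-projection via a recursive application of Lemma \ref{baer_maus} (for smaller $s$) to a suitable auxiliary polynomial encoding the linear-factor condition will yield the required count; here the example $F=Z^m-T_1T_s^m$, for which the linear-factor locus is $\{t_1=c^m\}$ counted by Lemma \ref{baer_maus} applied to $C^m-T_1$, is a useful guide. An analogous treatment for fibers that reduce only into factors of degree $\ge 2$ (each contributing $\ll H^{1/2+\varepsilon}$) handles those cases with room to spare. Adding all contributions gives $M_F(H)\ll_{F,\varepsilon}H^{s-1+1/m+\varepsilon}$.
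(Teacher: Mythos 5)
Your overall architecture matches the paper's proof: induction on $s$, the base case via Lemma \ref{kapstadt} plus the Browning--Heath-Brown bound of Lemma \ref{athen} (with $T\ge P_1^m$ coming from the monomial $Z^m$, and uniformity in the coefficients), fibration over $s-1$ of the parameters, and a separate treatment of the fibers where the two-variable polynomial degenerates. The base case and the contribution of the irreducible fibers are fine as you state them.

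However, the step you yourself identify as the main obstacle --- showing that the number of $\mathbf{t}'$ for which $F_{\mathbf{t}'}(Z,T_s)$ acquires a linear factor in $Z$ is $\ll H^{s-2+1/m+\varepsilon}$ --- is left as a plan, and the plan as described does not go through. Eliminating the coefficients $\mathbf{y}$ of $p$ from the system $F(p(T_s),\mathbf{t}',T_s)\equiv 0$ cuts out a subvariety of the $(\mathbf{y},\mathbf{t}')$-space, but its projection to $\mathbf{t}'$-space is a \emph{thin} set, not a Zariski-closed one (your own example $Z^m-T_1T_s^m$ shows this: the bad locus is the set of perfect $m$-th powers $t_1$), so no dimension count of a subvariety gives the bound; and to invoke the lemma recursively with the exponent $1/m$ you need a single \emph{irreducible} polynomial, \emph{monic of degree exactly $m$} in one variable over $\zet[\mathbf{T}']$, which your elimination does not produce. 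The paper's device is to choose, once and for all by Hilbert's irreducibility theorem, an integer specialisation $t_s$ of the remaining free variable such that $G(Z;T_1,\ldots,T_{s-1})=F(Z;T_1,\ldots,T_{s-1},t_s)$ stays irreducible; then every bad $\mathbf{t}'$ (with linear factor $Z-p(T_s)$, $p\in\zet[T_s]$ monic-integral by Gauss) makes $G(Z;\mathbf{t}')$ have the integer root $p(t_s)$, so the induction hypothesis applies verbatim to $G$ and gives $U\ll H^{s-2+1/m+\varepsilon}$. That is the concrete "auxiliary polynomial" your sketch is missing. A second, smaller gap: for the fibers that split only into factors of degree $\ge 2$ in $Z$, the per-fiber bound $H^{1/2+\varepsilon}$ summed over all $O(H^{s-1})$ fibers gives only $H^{s-1/2+\varepsilon}$, which is too large once $m\ge 3$; you must first bound the number of reducible fibers, which the paper does by the classical estimate \eqref{vancouver} (Cohen's $H^{s-3/2}\log H$), an input your argument nowhere supplies.
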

\begin{proof}
Without loss of generality we may assume that $H \ge 2$.
Then by Lemma \ref{kapstadt}, there exists
a constant $\alpha \ge 1$, depending at most on $F$,
such that whenever $\mathbf{t} \in \ce^s$ with
$|\mathbf{t}| \le H$
and $F(z; t_1, \ldots, t_s)=0$
for some $z \in \ce$, then $|z| \le H^\alpha$.
We proceed by induction on $s$. For $s=1$, Lemma \ref{athen} gives
\begin{align*}
  M_F(H) & \le \#\{(z, t_1) \in \zet^2: |z| \le H^\alpha, |t_1| \le H,
  F(z; t_1)=0\} \\ & \ll_{F, \varepsilon} H^\varepsilon
  \exp \left(\frac{\alpha (\log H)^2}{\log T} \right).
\end{align*}
Now $F(Z; T_1)$ contains the monomial $Z^m$, whence $T \ge H^{\alpha m}$,
and we obtain
\[
  M_F(H) \ll_{F, \varepsilon} H^{1/m+\varepsilon},
\]
as claimed. Next, let us discuss the case $s>1$, assuming that
the lemma has already been proved for $s-1$.
Let us first consider
those $t_2, \ldots, t_s \in \zet$ bounded in modulus by $H$ such that
$F(Z; T_1)=F(Z; T_1, t_2, \ldots, t_s)$ still is irreducible over the
rationals, as a polynomial in $Z$ and $T_1$. Then as above the number
of permissible $z$ and $t_1$ can be bounded by
$O_{F, \varepsilon}(H^{1/m+\varepsilon})$, since the term $Z^m$ is
still present. Taking into account $O(H^{s-1})$ choices for
$t_2, \ldots, t_s$, we end up with a contribution of
$O_{F, \varepsilon}(H^{s-1+1/m+\varepsilon})$, which is compatible
with \eqref{obelisk_pond}. Next, let us discuss those $t_2, \ldots, t_s
\in \zet$ bounded in modulus by $H$, such that $F(Z; T_1)$
becomes reducible over the rationals.
As $F(Z; T_1, \ldots, T_s)$ is irreducible over the rationals,
by \eqref{vancouver}
the number of such exceptional specialisations $t_2, \ldots, t_s$ can  
be bounded by $O_{F, \varepsilon}(H^{s-3/2+\varepsilon})$.
Now if $F(Z; T_1)$ becomes reducible
over the rationals, each irreducible
factor must be at least linear in $Z$, since
$F(Z; T_1)$ is monic in $Z$. If each irreducible factor is at least
quadratic in $Z$, then by the same argument as above we get a
contribution of $O_{F, \varepsilon}(H^{1/2+\varepsilon})$ for
the number of zeros of $F(Z; T_1)$, and together with
$O_{F, \varepsilon}(H^{s-3/2+\varepsilon})$ possible choices for
$t_2, \ldots, t_s$ we again end up with a bound compatible with
\eqref{obelisk_pond}. It remains to discuss those $t_2, \ldots, t_s
\in \zet$ bounded in modulus by $H$ for which $F(Z; T_1)$ splits off
a linear factor in $Z$. Let $U$ denote the number of such
$t_2, \ldots, t_s$. We can bound $U$ by the following
`fibration argument': since $F$ is an integer polynomial, monic in $Z$,
any linear factor of $F(Z; T_1)$
can be assumed to have integer coefficients and being monic in $Z$.
Given a tuple  $(t_2, \ldots, t_s)$ counted by $U$, every choice of 
$t_1 \in \zet$ gives rise to a monic integer one-variable polynomial 
$F(Z)=F(Z; t_1, t_2, \ldots, t_s)$
of degree $m$ having an integer root $z$.
But $F(Z; T_1, \ldots, T_s)$ is irreducible over
the rationals, so by Hilbert's Irreducibility Theorem
we can choose $t_1 \in \zet$ such that the specialized
polynomial $G(Z; T_2, \ldots, T_s) = F(Z; t_1, T_2, \ldots, T_s)$ still
is irreducible over the rationals.
Then $G$ still is an integer polynomial, only depending on $F$,
and monic of degree $m$ in $z$.
Therefore our inductive assumption is applicable to $G$, yielding
\[
  M_G(H) \ll_{F, \varepsilon} H^{s-2+1/m+\varepsilon}.
\]
On the other hand, as observed above,
\[
  M_G(H) \ge U,
\]
since all
$(t_2, \ldots, t_s)$ counted by $U$, for all $t_1 \in \zet$, in particular
our special choice, give rise to a specialized $F(Z)$ having an
integer root $z$. Combining the latter two bounds, we obtain
\[
  U \ll_{F, \varepsilon} H^{s-2+1/m+\varepsilon}.
\]
Once $(t_2, \ldots, t_s)$ have been fixed, we use the trivial bound
$O(H)$ for the $t_1$'s and get a total contribution of
$O_{F, \varepsilon}(H^{s-1+1/m+\varepsilon})$,
which again is compatible with
\eqref{obelisk_pond}. This finishes the proof.
\end{proof}
\section{Proof of Theorem \ref{miami}}
\label{tim}
Let us first briefly remark that without loss of generality we may
restrict to $f(X, \mathbf{T})$ that are monic in $X$: For suppose that
$f(X, \mathbf{T}) \in \zet[X, T_1, \ldots, T_s]$ of degree $n$ in $X$
is given. Then $f(X, \mathbf{T})$ is of the form
\[
  f(X, \mathbf{T}) = g_0(\mathbf{T}) X^n +
  g_1(\mathbf{T}) X^{n-1} + \ldots + g_n(\mathbf{T})
\]
for suitable $g_0, \ldots, g_n \in \zet[T_1, \ldots, T_s]$.
As $f$ is of degree $n$ in $X$, the polynomial $g_0(\mathbf{T})$ is not
identically zero and will be zero for at most $O_f(H^{n-1})$ values of
$\mathbf{t} \in \zet^s$ when $|\mathbf{t}| \le H$,
which is
of negligible order of magnitude with respect to Theorem \ref{miami}.
Now consider the polynomial
\begin{align*}
  h(X, \mathbf{T}) & = g_0(\mathbf{T})^{n-1}
  f(X/g_0(\mathbf{T}), \mathbf{T}) \\
  & = X^n + g_1(\mathbf{T}) X^{n-1} + g_0(\mathbf{T}) g_2(\mathbf{T})
  X^{n-2} + \ldots + g_0(\mathbf{T})^{n-1} g_n(\mathbf{T})
\end{align*}
in $\zet[X, T_1, \ldots, T_s]$, which shares all relevant properties
with $f(X, \mathbf{T})$: Considered over $\qu(T_1, \ldots, T_s)$, both
$f$ and $h$ have the same splitting field and hence the same Galois group,
and for fixed $\mathbf{t} \in \zet^s$ with $g_0(\mathbf{t}) \ne 0$,
again $f$ and $h$ over $\qu$ have the same splitting field and hence
the same Galois group. In particular, as $f(X, \mathbf{T})$ is irreducible
over $\qu$, the same is true for $h(X, \mathbf{T})$.
With respect to Theorem \ref{miami}, we may therefore without loss of
generality assume that $f$ is monic in $X$, i.e. $g_0(T) \equiv 1$.

Now let $\Phi_{f, K}(Z, \mathbf{T})$ be the Galois resolvent from
Lemma \ref{stress}.
Then for given $\mathbf{t} \in \zet^s$, if the polynomial
$f(X,\mathbf{t})$ has Galois group $K$ over $\qu$,
then $\Phi_{f, K}(Z, \mathbf{t})$ has an integer root $z$.
Factoring $\Phi_{f, K}(Z, \mathbf T)$
over the rationals, each irreducible factor can be assumed to
have integer coefficients, being monic in $Z$, and having degree
at least $|G|/|K|=|G/K|$ in $Z$.
Applying Lemma \ref{baer_maus} to each such
irreducible factor of $\Phi_{f, K}(Z)$, we immediately obtain
Theorem \ref{miami}.
\section{Proof of Corollary \ref{mittwoch} and \ref{regen}}
\label{dogs}
Let $n$ be the degree of $X$ in $f(X, \mathbf{T})$,
so $f(X, \mathbf{T}) = g_0(\mathbf{T}) X^n + O(X^{n-1})$ for a
suitable $g_0(\mathbf{T}) \in \zet[T_1, \ldots, T_s]$.
The two corollaries then follow from Theorem \ref{miami}
on noting that, by Lemma \ref{lem:UnramifiedImpliesEmbedding},
if $f(X, \mathbf{t})$ for some specialisation
$\mathbf{t} \in \zet^s$ still is of degree $n$ in $X$, and separable,
then the Galois group $K$ of $f$ over $\qu$ will be a subgroup of $G$.
The exceptional $\mathbf{t} \in \zet^s$ with $|\mathbf{t}| \le H$
such that $f(X, \mathbf{t})$ has degree less than $n$ or becomes
inseparable are easily seen to be of order or magnitude $O_f(H^{s-1})$
and can therefore be neglected, as they must satisfy $g_0(\mathbf{t})=0$
or $\Delta(\mathbf{t})=0$, where $\Delta(\mathbf{t})$
is the discriminant of $f(X, \mathbf{t})$. Since $f$ was assumed
to be of degree $n$, the polynomial $g_0(\mathbf{T})$ is not
identically zero, and since $f$ was assumed to be irreducible,
$\Delta(\mathbf{T})$ cannot be identically zero, whence the bound
$O(H^{s-1})$ for those exceptional $\mathbf{t}$ immediately follows.
\section{Proof of Theorem \ref{bodensee}}
\label{tom}
We follow the `fibration approach' from the proof of Lemma
\ref{baer_maus} to reduce the problem to the special case $r=1$:
Suppose that $\mathbf{t} \in \zet^s$ is counted by $J_f(H)$.
Then for this fixed $\mathbf{t}$, the specialised polynomial
$f(X_1, \ldots, X_r)$ factorises over $\qu[X_1, \ldots, X_r]$.
Now by assumption $f$ has the monomial of highest degree in $X_i$
of the form $X_i^n h$, where $h$ is not identically zero
and depends at most on $T_1, \ldots, T_s$,
but not on $X_j$ for $j \ne i$.
As there are only $O_f(H^{s-1})$ many $\mathbf{t} \in \zet^s$
with $|\mathbf{t}| \le H$ and $h(\mathbf{t})=0$,
which is a negligible quantity with respect to Theorem \ref{bodensee},
we may without loss of generality assume that $h(\mathbf{t}) \ne 0$.
Hence the polynomial $f$ factorises in
the form $f=g_1 g_2$, where $g_1$ and $g_2$ are rational polynomials
with degree less than $n$ in $X_i$. This remains true for the
resulting one-variable polynomial $f(X_i)$ after specialising all the
$X_j$ where $j \ne i$ to any rational numbers. Hence, using
Hilbert's Irreducibility Theorem to choose integer specialisations
for the variables $x_j$ with $j \ne i$ such that $f(X_i)$ as a
polynomial over $\qu(T_1, \ldots, T_s)$ keeps its Galois group $G$,
we then find that any $\mathbf{t} \in \zet^s$ counted by $J_f(H)$
leads to a specialised $f(X_i)$ that becomes reducible. Using
Corollary \ref{regen} we therefore find that
$J_f(H) \ll_{f, \varepsilon} H^{s-\gamma_G + \varepsilon}$.

\end{document}